\newcommand{\Rmnum}[1]{\expandafter\@slowromancap\romannumeral #1@}
	\newtheorem{theorem}{Theorem}
	\newtheorem{observation}[theorem]{Observation}
	\newtheorem{guess}[theorem]{Conjecture}
	\newtheorem{question}[theorem]{Question}
	\newtheorem{lemma}[theorem]{Lemma}
	\newenvironment{proof}{\noindent {\bf
			Proof.}}{\rule{3mm}{3mm}\par\medskip}
	\newcommand{\q}{\uppercase\expandafter{\romannumeral1}}
	\newcommand{\qq}{\uppercase\expandafter{\romannumeral2}}
	\newcommand{\qqq}{\uppercase\expandafter{\romannumeral3}}
	\newcommand{\qqqq}{\uppercase\expandafter{\romannumeral4}}
	\newcommand{\qqqqq}{\uppercase\expandafter{\romannumeral5}}
	\newcommand{\qqqqqq}{\uppercase\expandafter{\romannumeral6}}
\begin{document}
		
\title{Relatively small counterexamples to Hedetniemi's conjecture  }
\author{ 
	Xuding Zhu\thanks{Zhejiang Normal University, 	
		email:xdzhu@zjnu.edu.cn. 
		Grant Numbers: NSFC 11971438 and 111 project of Ministry of Education of China.}}


	\maketitle
	\begin{abstract}
		
		Hedetniemi conjectured in 1966 that $\chi(G \times H) = \min\{\chi(G), \chi(H)\}$
		for all graphs $G$ and $H$.  Here $G\times H$ is the graph with vertex set
	$	V(G)\times V(H)$ defined by putting $(x,y)$ and $(x',y')$ adjacent if and only if
		$xx'\in E(G)$ and $yy'\in E(H)$. This conjecture received a lot of attention in the past half century. 
	Recently, Shitov refuted this conjecture. 
		   Let $p$ be the minimum number of vertices in a graph of odd girth $7$ and fractional chromatic number greater than $3+4/(p-1)$.
		 Shitov's proof  shows that Hedetniemi's conjecture fails for some graphs with chromatic number about  $p^33^p $ and with  about $(p^33^p)^{p^43^{p-1}} $ vertices. In this paper,  we  show  that the conjecture fails already for some graphs $G$ and $H$  with chromatic number $3\lceil \frac {p+1}2 \rceil $ and with $p \lceil (p-1)/2 \rceil$ and   $3  \lceil \frac {p+1}2 \rceil   (p+1)-p$ vertices, respectively. The currently known upper bound for $p$ is  $83$. Thus Hedetniemi's conjecture fails for some graphs $G$ and $H$ with chromatic number $126$, and with  $3,403$ and $10,501$ vertices, respectively.   
	\end{abstract}
	
	\section{Introduction}

	The \emph{ product}    $G \times H$ of  graphs $G$ and $H$  has vertex set $V(G) \times V(H)$ and has  $(x, y)$ adjacent to $(x',y')$   if and only if $xx' \in E(G)$ and $yy' \in E(H)$.  Many names for this product are used in the literature, including   the \emph{categorical product}, the \emph{tensor product} and the 
	\emph{direct product}. It is the most important product in this   note. We just call it \emph{the product}. We may write $x \sim y$ (in $G$)
	to denote $xy \in E(G)$.
	
	 A proper colouring $\phi$ of $G$  induces a proper colouring $\Phi$  of $G \times H$ defined as $\Phi(x,y) = \phi(x)$. So $\chi(G \times H) \le   \chi(G)$.  Symmetrically, we also have 
	 $\chi(G \times H) \le \chi(H)$. Therefore $\chi(G \times H) \le \min \{\chi(G), \chi(H)\}$.
	 In 1966, 
	 Hedetniemi conjectured in  \cite{Hedetniemi} that equality always holds in the above inequality. 
	 
	 \begin{guess}[Hedetniemi's conjecture]
	 	\label{hj}
	 	For any positive integer $c$, if $G \times H$ is $c$-colourable, then at least one of $G$ and $H$ is $c$-colourable.
	 \end{guess}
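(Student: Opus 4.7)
Given that the abstract announces explicit small counterexamples, my plan is not to prove Conjecture~\ref{hj} but to sketch how I would refute it with graphs of the stated sizes, working within Shitov's framework and tightening the parameters aggressively.

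The standard device is the exponential graph: for a graph $G$ and an integer $c$, let $H_0 = K_c^G$ be the graph whose vertices are all functions $f : V(G) \to \{1,\ldots,c\}$, with $f \sim f'$ iff $f(x) \neq f'(y)$ for every edge $xy$ of $G$. The map $(x,f) \mapsto f(x)$ is a proper $c$-colouring of $G \times H_0$, so $\chi(G \times H_0) \le c$ for free, and the same bound survives on passing to any subgraph $H$ of $H_0$. A counterexample to Hedetniemi's conjecture thus reduces to producing $G$ and $H \subseteq K_c^G$ with $\chi(G) > c$ and $\chi(H) > c$.

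First I would fix the small graph $G_0$ guaranteed by the definition of $p$: odd girth $7$, $p$ vertices, and fractional chromatic number strictly exceeding $3 + 4/(p-1)$. Second I would build $G$ as a controlled blow-up of $G_0$ on $p \lceil (p-1)/2 \rceil$ vertices, inflating $\chi$ up to the target $c := 3 \lceil (p+1)/2 \rceil$ via the interaction of the fractional-chromatic threshold with the multiplicative blow-up. Third, rather than working with all of $K_c^G$, I would pick a subgraph $H$ indexed by the $c$-colourings of a simple auxiliary gadget (something like a short path or cycle on $p+1$ points), so that $|V(H)|$ stays linear in $c$ and $p$ and matches the abstract's target $3 \lceil (p+1)/2 \rceil (p+1) - p$. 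The upper bound $\chi(G \times H) \le c$ is then almost automatic from the exponential-graph setup, and $\chi(G) > c$ should follow from a direct fractional argument on the blow-up of $G_0$.

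The main obstacle is the lower bound $\chi(H) > c$. The natural plan is a contradiction argument: a hypothetical proper $c$-colouring of $H$ would, via the adjacency constraints linking vertices of $H$ (which are themselves functions on $V(G)$), descend to either a proper $3$-colouring of an odd cycle of length $\le 5$ in $G_0$ or to a fractional colouring of $G_0$ using fewer than $3+4/(p-1)$ colours; the first is ruled out by the odd-girth $7$ hypothesis and the second by the defining property of $p$. Calibrating this descent so that the critical threshold $3 + 4/(p-1)$ appears exactly, rather than with the wide slack Shitov used to absorb his astronomical parameters, is where the quantitative gain must come from, and where I expect essentially all the real work to concentrate.
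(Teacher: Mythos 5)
Your high-level framework matches the paper: take the minimal odd-girth-$7$ graph $F$ on $p$ vertices with $\chi_f(F) > 3 + 4/(p-1)$, set $q = \lceil (p-1)/2\rceil$ and $c = 3q+2$, let $G = F[K_q]$ (so $\chi(G) \ge q\,\chi_f(F) > 3q+2 = c$), and exhibit a small subgraph $H$ of $K_c^G$ with $\chi(H) > c$. That scaffolding is exactly what the paper uses, and you correctly identify that the only thing ``for free'' is $\chi(G\times H)\le c$.

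However, there is a genuine gap where you guess the nature of $H$ and where the real difficulties lie, and the guesses point in the wrong direction. First, $H$ is \emph{not} ``indexed by the $c$-colourings of a simple auxiliary gadget'' such as a path or cycle on $p+1$ points. The paper builds $H$ as an explicit combinatorial gadget with four types of vertices: a $c$-clique $\{g_i\}$ of constant maps, a single vertex $\phi$, and for each $i\in[p]$ families $\{\mu_{i,t}\}$ and $\{\theta_{i,t}\}$. Each such vertex is then \emph{realized} as a map $V(G)\to[c]$ whose value at $(v_s,j)$ depends on the distance $d_F(v_s,v_i)$ in $F$, and the odd-girth-$7$ hypothesis is what makes these maps pairwise adjacent in $K_c^G$ (the verification is the longest part of the proof; short odd cycles in $F$ would create conflicts like $d_F(v_s,v_i),d_F(v_{s'},v_i)\in\{0,2\}$ for adjacent $v_s,v_{s'}$). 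Second, the argument that $\chi(H) > c$ does \emph{not} descend to a colouring of $F$ and does not invoke $\chi_f(F)$ at all; it is a short pigeonhole chase inside the gadget: assuming $\Psi(g_i)=i$, the vertex $\phi$ forces $\Psi(\phi)=i^*\le p$, the clique $\{\mu_{i^*,t}\}\cup\{g_{i^*}\}$ of size $2q+2$ forces some $\Psi(\mu_{i^*,t^*}) = t^*\ge 2q+2$, and then $\theta_{i^*,t^*}$ is adjacent to $g_j$ for all $j\notin\{i^*,t^*\}$, to $\phi$, and to $\mu_{i^*,t^*}$, leaving it no colour. So the two hypotheses on $F$ are allocated differently from your sketch: the fractional chromatic number is used \emph{only} to get $\chi(G) > c$, and the odd girth is used \emph{only} to verify $H\subseteq K_c^G$; the bound $\chi(H)>c$ is purely structural. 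The step you flagged as ``where essentially all the real work must concentrate'' (the lower bound on $\chi(H)$) is in fact the easy part; the hard part is designing the maps $\mu_{i,t},\theta_{i,t}$ so that the required adjacencies hold, and your sketch contains no candidate for that construction.
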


	 This conjecture received a lot of attention in the past half century  (see \cite{ES,Klav, Sauer,Tardif,Zhu,Zhu-frac}). Some special cases are confirmed.  In particular, it was proved by El-Zahar and Sauer \cite{ES} that Hedetniemi's conjecture holds for $c = 3$ (where for $ c \le 2$, the conjecture holds trivially).   
	 Also, it was proved in \cite{Zhu-frac} that a fractional version of Hedetniemi's conjecture is true, i.e., for any graphs $G$ and $H$, 
	 $\chi_f(G \times H) = \min \{\chi_f(G), \chi_f(H)\}$.
	 
	 Recently,  Shitov   refuted Hedetniemi's conjecture    \cite{Shitov}. He proved that 
	 Hedetniemi's conjecture fails for sufficiently large $c$. Let $p$ be the minimum number of vertices in a graph $G$ of odd girth $7$ and fractional chromatic number greater than $3+4/(p-1)$. Shitov's proof shows that Hedetniemi's conjecture fails for some  $c$ about $ 3^pp^3$ and for graphs with about $c^{p^32^{p-1}}$ vertices. 
	 The current known upper bound for $p$  is $83$ \cite{Tardif4}. 
	Thus Shitov's result shows that Hedetniemi's conjecture fails for some $c$  that is about $ 3^{95}$ and for graphs with about $(3^{95})^{3^{99}}$ vertices.
	  
	  On the other hand, we do not know if Hedetniemi's conjecture holds for any integer $c \ge 4$. A natural question is whether   Hedetniemi's conjecture fails for relatively small  $c$. 
	  
	  This paper shows that Hedetniemi's conjecture fails for some graphs $G$ and $H$ with chromatic number  $3\lceil (p+1)/2 \rceil $ and with   $p \lceil (p-1)/2 \rceil$ and   
	  
	 Using the upper bound $p \le 83$, we conclude that  Hedetniemi's conjecture fails for  $c = 125$ (and hence $G$ and $H$ can be assumed to have chromatic number $126$). The number of vertices in $G$ and $H$ are   $3,403$ and $10,501$, respectively.

	  	\section{Exponential graph}
	  	
	  	One of the standard tools used in the study of Hedetniemi's conjecture is the concept of \emph{exponential graphs}.  Let $c$ be a positive integer. We denote by $[c]$ the set 
	  	$\{1,2,\ldots, c\}$, and for  integers $c \le  d$, let  $[c,d]=\{c, c+1, \ldots, d\}$.    For a graph $G$, the exponential graph 
	  	$K_c^G$ has  vertex set $$\{f: f \text{ is a mapping from $V(G)$ to $[c]$}\},$$
	  	with $fg \in E(K_c^G)$ if and only if for any edge $xy \in E(G)$, $f(x) \ne g(y)$. 
	  	In particular, $f\sim f$ is a loop in $K_c^G$ if and only if $f$ is a proper $c$-colouring of $G$.
	  	So if $\chi(G) > c$, then $K_c^G$ has no loop. 
	  	

	  	For two graphs $G$ and $H$, a \emph{homomorphism from $G$ to $H$} is a mapping $\phi: V(G) \to V(H)$ that preserves edges, i.e., for every edge $xy$ of $G$, $\phi(x)\phi(y)$ is an edge of $H$. We say $G$ is \emph{homomorphic} to $H$, and write $G \to H$,  if there is a homomorphism from $G$ to $H$. The ``homomorphic" relation ``$\to$"  is a quasi-order. It is reflexive and transitive: if $G \to H$ and $H \to Q$ then $G \to Q$.  The composition $\psi \circ \phi$ of a homomorphism 
	  	$\phi$ from $G$ to $H$ and a homomorphism $\psi$ from $H$ to $Q$ 
	  	is a homomorphism from $G$ to $Q$. 
	  	
	  	Note that a homomorphism from a graph $G$ to $K_c$ is equivalent to a proper $c$-colouring of $G$. Thus if $G \to H$, then $\chi(G) \le \chi(H)$. The following result was proved in \cite{ES}. For the completeness of this paper, we include a short proof.
	  	
	  	\begin{lemma}[\cite{ES}]
	  		\label{lem-homo}
	  		For any graph $H$, 
	  		$\chi(G \times H) \le c$ if and only if   $H$ is homomorphic to $K_c^G$. 
	  	\end{lemma}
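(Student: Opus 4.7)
The plan is to exhibit an explicit bijection between proper $c$-colourings $\Phi$ of $G \times H$ and homomorphisms $\phi: H \to K_c^G$, given by currying: a function of two variables $(x,y)$ corresponds to a function $y \mapsto \phi(y)$ whose values are themselves functions of $x$. The whole proof is then just unwinding the definition of edges in $G \times H$ and in $K_c^G$ under this correspondence.

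For the forward direction, I would start with a proper $c$-colouring $\Phi : V(G) \times V(H) \to [c]$ of $G \times H$, and for each $y \in V(H)$ define $f_y \in V(K_c^G)$ by $f_y(x) = \Phi(x,y)$. To check that $y \mapsto f_y$ is a homomorphism from $H$ to $K_c^G$, I would fix an edge $yy' \in E(H)$ and an arbitrary edge $xx' \in E(G)$; then $(x,y)(x',y') \in E(G \times H)$, so properness of $\Phi$ gives $f_y(x) = \Phi(x,y) \ne \Phi(x',y') = f_{y'}(x')$, which is exactly the condition for $f_y f_{y'} \in E(K_c^G)$.

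For the backward direction, given a homomorphism $\phi : H \to K_c^G$, I would define $\Phi(x,y) = \phi(y)(x)$ and verify that $\Phi$ is a proper $c$-colouring of $G \times H$. Any edge of $G \times H$ has the form $(x,y)(x',y')$ with $xx' \in E(G)$ and $yy' \in E(H)$; since $\phi$ is a homomorphism, $\phi(y)\phi(y') \in E(K_c^G)$, which by the definition of the exponential graph means $\phi(y)(x) \ne \phi(y')(x')$ for every edge $xx'$ of $G$, giving $\Phi(x,y) \ne \Phi(x',y')$ as required.

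There is no real obstacle here; the lemma is essentially a restatement of the definition of $K_c^G$ as an exponential object, and the only thing to be slightly careful about is that the condition defining edges in $K_c^G$ quantifies over \emph{all} edges $xx'$ of $G$, which is exactly what matches the definition of edges in the product $G \times H$. Consequently, the two constructions above are mutual inverses, so both implications follow from the same computation.
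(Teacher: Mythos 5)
Your proof is correct and uses exactly the same currying correspondence $\Phi(x,y) \leftrightarrow \phi(y)(x)$ as the paper, just spelled out in both directions rather than stated as a one-line bijection. No substantive difference.
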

	  	\begin{proof}
	  		The maps $\Psi:V(G)\times V(H)\to[c]$ naturally correspond to the maps $f: V(H)\to V(K_c^G)$ via $f(u)(v)=\Psi(v,u)$. This is a 1-1 correspondence where proper colorings correspond to exactly the homomorphisms, i.e., $\Psi$ is a proper colouring if and only if $f$ is a homomorphism from $F$ to $K_c^G$. 
	  	\end{proof}

	  	So $K_c^G$ is the largest graph $H$ in the order of homomorphism with the property 
	  	that $\chi(G \times H) \le c$.  
	  	Thus  Conjecture \ref{hj}  is equivalent to the following statement:
	  	
	  	\bigskip
	  	(*)	{\em For any positive integer $c$, if $\chi(G) >c$, then $\chi(K_c^G) \le c$.}
	  	\bigskip

	  	\section{Construction of a counterexample}

	  	The {\em lexicographic product} $G[H]$ of two graphs $G$ and $H$ is the graph obtained from $G$ by replacing each vertex of $G$  by a copy of $H$. Thus the graph $G[K_q]$ has vertex set  $\{ (x, i): x \in V(G), i \in [q]\}$, where $(x,i)$ and $ (y,j)$ are adjacent  if and only if 
	  	either $xy \in E(G)$ or $x=y$ and $i \ne j$. 
	  	The 
	  	{\em fractional chromatic number}  $\chi_f(G)$ of $G$ is defined as 
	  	$$\chi_f(G) = \inf \{ \chi(G[K_q])/q: q = 1,2,\ldots \}.$$
	  	The {\rm odd girth} of  $G$ is the length of a shortest odd cycle in $G$. 
	  	
	  	Let $p$ be the minimum order of a graph of odd girth $7$ and with fractional chromatic number greater than $3+ \frac{4}{p-1}$. 
	  	Let $F$ be such a graph. 
	  	The existence of $F$ is guaranteed by  a classical result of Erd\H{o}s \cite{Erdos} that there are graphs $F$ with $girth(F) \ge g$ and $\chi_f(F) \ge r$ for any   $g,r$ (usually it is stated as $\chi(F) \ge r$, but the   proof actually shows that $\chi_f(F) \ge r$). In Section \ref{sec4}, we shall discuss more on the order $p$ of $F$. We assume  $V(F) = \{v_1, v_2, \ldots, v_p\}$.

	  Let $q \ge (p-1)/2$ be an integer and $c =3q+2$. Let $G=F[K_q]$. Let $H$ be the graph that consists of 
	  \begin{enumerate}
	  	\item[(1)]
	   vertices $g_i$ for $i\in[c]$ forming a clique;
	  	\item[(2)] a vertex $\phi$ adjacent to $g_i$ for $i>p$;
	  	\item[(3)] for each $i\in[p]$, for each $t\in[q+2,3q+2]$, 
	  	 a vertex $\mu_{i,t}$, where for each 
	  	 $i\in[p]$, the set $\{\mu_{i,t}: t \in [q+2,3q+2]\} \cup \{g_i\}$   forms a clique,  and the vertex $\mu_{i,t}$ is further adjacent to  $g_j$ with $j>2q+1, j\ne t$;  
	  	\item[(4)] for each $i\in [p]$ and $t\in[2q+2,3q+2]$,  a vertex $\theta_{i,t}$ adjacent to $\phi$ and $\mu_{i,t}$ and   to each $g_j$ with $j\notin\{i,t\}$.
	  \end{enumerate}

	  \begin{figure}[!htb]
	  	\centering
	  	\includegraphics[scale=0.8]{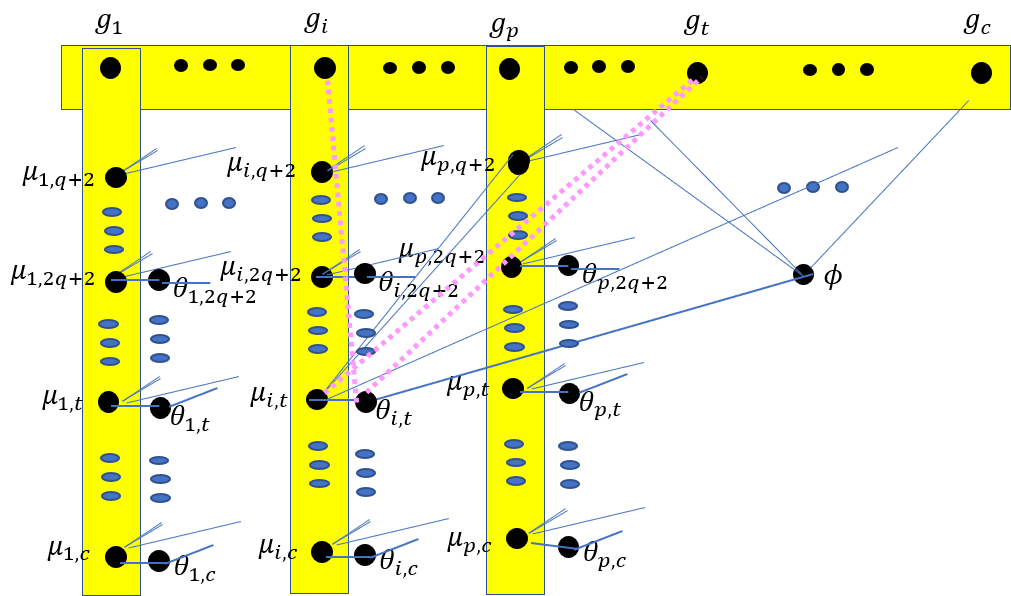}
	  	\caption{The graph $H$, where each shaded rectangle is a clique}\label{fig1}
	  \end{figure}

	  \begin{theorem}
	  	\label{thm-main}
	  	$\chi(G), \chi(H) > c$ and $\chi(G \times H) \le c$.
	  \end{theorem}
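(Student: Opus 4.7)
The theorem asserts three things---$\chi(G)>c$, $\chi(H)>c$, and $\chi(G\times H)\le c$---the last of which is the heart of the argument. For $\chi(G)>c$, I would use the inequality $\chi(F[K_q])\geq q\cdot\chi_f(F)$, which is immediate from $\chi_f(F)=\inf_q\chi(F[K_q])/q$. Combined with $\chi_f(F)>3+4/(p-1)$ and $q\ge(p-1)/2$, this yields $\chi(G)>3q+2=c$.

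For $\chi(H)>c$, I would assume towards a contradiction that $\psi$ is a proper $c$-colouring of $H$ and, after permuting colours, take $\psi(g_i)=i$. Since $\phi$ is adjacent to every $g_j$ with $j>p$, $\psi(\phi)=\alpha$ for some $\alpha\in[p]$. For each $t\in[2q+2,3q+2]$, the vertex $\theta_{\alpha,t}$ is adjacent to $\phi$, $\mu_{\alpha,t}$, and every $g_j$ with $j\notin\{\alpha,t\}$, leaving only $t$ as a candidate colour and forcing $\psi(\mu_{\alpha,t})\ne t$. On the other hand, the $2q+1$ vertices $\mu_{\alpha,t'}$ with $t'\in[q+2,3q+2]$ form, together with $g_\alpha$, a clique in $H$, and the adjacencies to $g_j$ with $j>2q+1,j\ne t'$ restrict their palettes to $([1,2q+1]\setminus\{\alpha\})\cup\{t'\}$. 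Removing the private colour $t'$ for each $t'\in[2q+2,3q+2]$, as dictated by the previous step, squeezes all $2q+1$ of the $\mu_{\alpha,t'}$ into the $2q$-element set $[1,2q+1]\setminus\{\alpha\}$---a contradiction.

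For $\chi(G\times H)\le c$, by Lemma~\ref{lem-homo} it suffices to exhibit a homomorphism $u\mapsto f_u$ from $H$ to $K_c^G$, i.e., maps $f_u:V(G)\to[c]$ such that $f_u(x)\ne f_v(y)$ whenever $uv\in E(H)$ and $xy\in E(G)$. My plan is to set $f_{g_i}\equiv i$ and $f_\phi(v_k,j)=k$, which immediately handles all edges of the $g$-clique and all edges between $\phi$ and the $g_j$'s with $j>p$. The crux is the design of $f_{\mu_{i,t}}$ and $f_{\theta_{i,t}}$. Writing $B_i:=\{w\in V(F):d_F(v_i,w)\le 2\}$ for the closed $2$-ball around $v_i$, I use that $F[B_i]$ is bipartite: the $2$-colouring $w\mapsto d_F(v_i,w)\bmod 2$ is proper on $F[B_i]$, since a monochromatic edge would complete a closed walk of length at most $5$ in $F$ containing an odd cycle, contradicting the odd girth $7$. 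Therefore $F[B_i][K_q]$ is properly $(2q)$-colourable; I would fix such a colouring $\gamma_i$ with values in $[1,2q+1]\setminus\{i\}$. Then set $f_{\mu_{i,t}}(w,j)=\gamma_i(w,j)$ when $w\in B_i$ and $f_{\mu_{i,t}}(w,j)=t$ otherwise; and $f_{\theta_{i,t}}(x)=t$ when $x$ has a $G$-neighbour of the form $(v_i,j)$, else $f_{\theta_{i,t}}(x)=i$.

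The main obstacle is the edge-by-edge verification of the homomorphism condition. The critical edges are $\theta_{i,t}\mu_{i,t}$ and $\mu_{i,t}\mu_{i,s}$ with $s\ne t$. For $\theta_{i,t}\mu_{i,t}$, the two-case definition of $f_{\theta_{i,t}}$ is unambiguous and consistent with $f_{\mu_{i,t}}$ because no $x\in V(G)$ can be simultaneously adjacent to some $(v_i,j)$ and to some $(w,j')$ with $d_F(v_i,w)\ge 3$: any such $x$ would satisfy $d_F(v_x,v_i)\le 1$ and $d_F(v_x,w)\le 1$ (since adjacency in $G=F[K_q]$ forces distance at most $1$ on the $F$-coordinates), whence $d_F(v_i,w)\le 2$ by the triangle inequality. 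For $\mu_{i,t}\mu_{i,s}$, one case-analyses on whether $v_x,v_y$ lie in $B_i$: the inside-inside case uses properness of $\gamma_i$; the outside-outside case uses $t\ne s$; and the mixed case uses the disjoint ranges of $\gamma_i$-values in $[1,2q+1]$ and of the top private colours in $[2q+2,3q+2]$. The sub-case $s,t\in[q+2,2q+1]$, where neither side has a top-range private colour, is the delicate one and is where I expect the bulk of the bookkeeping, handled by a small refinement of the above template. All remaining edges of $H$ are covered automatically by the palette restrictions built into the definitions.
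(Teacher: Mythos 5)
Your proofs of $\chi(G)>c$ and $\chi(H)>c$ are correct. For $\chi(H)>c$ you take a slightly different route from the paper (the paper locates a single $t^*$ with $\Psi(\mu_{i^*,t^*})\ge 2q+2$ and derives a contradiction from $\theta_{i^*,t^*}$ alone; you instead use \emph{all} the $\theta_{\alpha,t}$ with $t\in[2q+2,3q+2]$ to ban those top colours from the $\mu_{\alpha,\cdot}$-clique and then apply pigeonhole), but both are sound.

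The gap is in part (3). Your construction of $f_{\mu_{i,t}}$ is the same idea as the paper's (the closed $2$-ball $B_i$ is bipartite by distance parity, so $F[B_i][K_q]$ admits a proper $2q$-colouring $\gamma_i$ avoiding $i$, and you set $f_{\mu_{i,t}}=\gamma_i$ inside $B_i\times[q]$ and $\equiv t$ outside), but your verification of the edge $\mu_{i,t}\mu_{i,s}$ is wrong in the mixed case, and this is not a peripheral sub-case you can postpone. You claim the mixed case ``uses the disjoint ranges of $\gamma_i$-values in $[1,2q+1]$ and of the top private colours in $[2q+2,3q+2]$,'' but $s,t$ range over all of $[q+2,3q+2]$; whenever one of $s,t$ lies in $[q+2,2q+1]$ it is a legitimate $\gamma_i$-value and a collision is entirely possible for a generic choice of $\gamma_i$. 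The delicate situation therefore arises whenever \emph{either} private colour lies in $[q+2,2q+1]$, not only ``the sub-case $s,t\in[q+2,2q+1]$'' you flag. The ``small refinement'' you defer is in fact the point of the paper's explicit formula for $\mu_{i,t}$: one must choose $\gamma_i$ so that the two colour classes of the distance-parity bipartition of $F[B_i]$ land in \emph{separated} ranges --- distance-$0$ and distance-$2$ vertices receive colours in $[1,q+1]\setminus\{i\}$ (values $j+\delta_{j\ge i}$) and distance-$1$ vertices receive colours in $[q+1,2q+1]\setminus\{i\}$ (values $q+j+\delta_{q+j\ge i}$). Then, because any $G$-edge from $B_i\times[q]$ to its complement exits through an $F$-vertex at distance exactly $2$ from $v_i$ (your triangle-inequality observation), the $B_i$-side colour is at most $q+1$ while the outside colour is at least $q+2-\delta_{i\ge t}\ge q+1$, and the paper's inequality $b+\delta_{b\ge i}<a-\delta_{i\ge a}$ (for $b\le q<q+2\le a$) makes them strictly separated. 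Without this structured choice of $\gamma_i$, your construction genuinely fails.

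There is a second, smaller omission: your $f_{\mu_{i,t}}$ must avoid the colour $i$ everywhere, because $\mu_{i,t}$ is adjacent to $g_i$ in $H$ and $f_{g_i}\equiv i$. Inside $B_i$ you arranged $\gamma_i$ to avoid $i$, but outside $B_i$ you set $f_{\mu_{i,t}}\equiv t$, and since $t$ ranges down to $q+2\le p$, the case $t=i$ can occur and then $i\in\mathrm{Im}(f_{\mu_{i,t}})$. The paper's formula uses $t-\delta_{i\ge t}$ precisely to sidestep this.
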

	  \begin{proof}
	It follows from the definition of fractional chromatic number that 
	  $$\chi(G) = \chi(F[K_q]) \ge q \chi_f(F) > 3q+2 =c.$$
	  Next we show that $\chi(H) > c$. Assume to the contrary that $\chi(H) \le c$ and $\Psi$ is a $c$-colouring of $H$. We may assume that $\Psi(g_i)=i$. 
	  
	  As  $\phi$ is adjacent to $g_j$ for $ j > p$, we conclude that   $\Psi(\phi) = i^*$ for some $i^* \le p$. 
	  
	  As the set   $\{ \mu_{i^*,t}:   t \in [q+2,3q+2]\} \cup \{g_{i^*}\}$    forms a clique of $2q+2$ vertices and $\Psi(g_{i^*})=i^* \le 2q+1$,  there exists $t^* \in [2q+2, 3q+2]$ such that
	  $\Psi(\mu_{i^*,t^*}) \ge 2q+2$. As $\mu_{i^*,t^*}$ is adjacent to 
	  all $g_j$ with $j \in [2q+2, 3q+2]- \{t^*\}$, we conclude that $\Psi(u_{i^*,t^*}) = t^*\ge 2q+2$.
	  
	    The vertex  $\theta_{i^*,t^*}$ is adjacent to each $g_j$ with $j \notin \{i^*,t^*\}$. Hence 
	    $\Psi( \theta_{i^*,t^*}) \in   \{i^*,t^*\}$. But $\theta_{i^*,t^*}$ is adjacent to $\phi$, which is coloured by $i^*$, and adjacent to $\mu_{i^*,t^*}$, which is coloured by $t^*$. This is a contradiction, and thus $\chi(H) > c$. 
	    
	  It remains to show that $\chi(G \times H) \le c$. By Lemma \ref{lem-homo}, it suffices to show that $H$ is homomorphic to $K_c^G$. We shall prove that $H$ is actually a subgraph of $K_c^G$. 
	  For this purpose, we simply define each vertex $y$ of $H$ as a mapping   $y: V(G) \to [c]$ and show that if $yy' \in E(H)$, then $yy' \in E(K_c^G)$, i.e., for any edge $xx' \in E(G)$, 
	  $y(x) \ne y'(x')$. (This is equivalent to say that the mapping $\Phi: V(G \times H) \to [c]$ defined as  $\Phi(x,y) = y(x)$ is a proper $c$-colouring of $G \times H$). 
	  
	    Each vertex of $G$ is of the form $(v_s,j)$, $s \in [p]$ and $j \in [q]$.
	   
	   For  $y \in V(H)$, let  $y: V(G) \to [c]$ be defined as follows:
	   \begin{enumerate}
	   	\item For $i \in [c]$ and $(v_s,j) \in V(G)$,  $g_i(v_s,j)= i$.
	   	\item  For $(v_s,j) \in V(G)$,    $\phi(v_s,j) = s$.
	   	\item  For  $i\in [p]$, $t\in[q+2,3q+2]$ and 
	   	$(v_s,j) \in V(G)$,  
	   	  \[
	   	\mu_{i,t}(v_s,j) = \begin{cases} 
	   	j+ \delta_{j \ge i}, &\text{ if $d_F(v_s, v_i) = 0$ or $2$ }, \cr
	   	q+j +  \delta_{q+j \ge i}, &\text{ if   $d_F(v_s, v_i) = 1$ }, \cr
	   	t-\delta_{i \ge t},  &\text{ if   $d_F(v_s, v_i) \ge 3$,  } \cr
	   	\end{cases}
	   	\]
	   	where 
	   	\[
	   	\delta_{j \ge i} = \begin{cases} 1, &\text{ if $j \ge i$}, \cr
	   	0, &\text{otherwise.} \cr
	   	\end{cases}
	   	\]
	   	\item For   $i\in[p]$, $t\in[2q+2,3q+2]$ and 
	   	 $(v_s,j) \in V(G)$,  \[
	   	\theta_{i,t}(v_s,j)= \begin{cases} 
	   	i, &\text{ if $d_F(v_s, v_i) \ge 2$ }, \cr
	   	t, &\text{ if   $d_F(v_s, v_i) \le  1$. }  \cr
	   	\end{cases}
	   	\]
	   \end{enumerate}

	   Now we show that if $yy' \in E(H)$, then $yy' \in E(K_c^G)$. For $ y \in V(H)$, let 
	   $$Im(y) = \{y(x): x \in V(G)\}.$$
	   Recall that $yy' \in E(K_c^G)$ if and only if for every edge $xx' \in E(G)$, $y(x) \ne y'(x')$. So we have the following easy observation.
	   
	   \begin{observation}
	   	\label{ob-easy}
	   	If $Im(y) \cap Im(y') = \emptyset$, then $yy' \in E(K_c^G)$. 
	   \end{observation}

	   \begin{observation}
	   	\label{ob3}
	   	   For  $i\in [p]$, $t\in[q+2,3q+2]$,
	   	   the following hold:
	   	   \begin{itemize}
	   	   	\item 	If 
	   	   	$d_F(v_s, v_i)=0$ or $2$, then $\mu_{i,t}(v_s,j) < i$ when $j < i$ and $\mu_{i,t}(v_s,j) > i$ when $j \ge i$.
	   	   	\item  If 
	   	   	$d_F(v_s, v_i)=1$, then $\mu_{i,t}(v_s,j) < i$ when $q+j < i$ and $\mu_{i,t}(v_s,j) > i$ when $q+j \ge i$.
	   	   	\item If 
	   	   	$d_F(v_s, v_i)\ge 3$, then $\mu_{i,t}(v_s,j) < i$ when $t \le  i$ and $\mu_{i,t}(v_s,j) > i$ when $t > i$.
	   	   \end{itemize}
	   	   So $i \notin Im(\mu_{i,t})$ and hence
	   	 $$ Im(\mu_{i,t})  = ([2q+1] \cup \{t\})-\{i\}.$$ 
	   	 Moreover, $$Im(\phi) = [p],  \ Im(\theta_{i,t}) = \{i,t\}.$$
	   \end{observation}
	    
	   In the construction of $H$, the vertices and edges of $H$ are added in four steps.  Now we show that $yy' \in E(H)$ implies that $yy' \in E(K_c^G)$ according to these four steps.

	    \medskip
	    
	(1)    For any $i \ne j$, $Im(g_i) \cap Im(g_j) = \{i\} \cap \{j\} = \emptyset$. So  $\{g_i: i \in [c]\}$   induces a clique. 
	   
	   \medskip
	   
	   (2)
	   For $j > p$, $Im(\phi) \cap Im(g_j) = \emptyset$. So $\phi \sim g_j$ in $ K_c^G$.

	   \medskip
	   
	   (3)
	   For $i \in [p]$ and $t \in [q+2, 3q+2]$, for $j \in \{i\} \cup ([2q+2, 3q+2] - \{t\})$,
	   $Im(\mu_{i,t}) \cap Im(g_j) = \emptyset$. So  $\mu_{i,t} \sim   g_j$   in $ K_c^G$. 
	   
	   Now we show that if $t \ne t'$, then $\mu_{i,t} \sim  \mu_{i, t'} $ in $ K_c^G$. Assume 
	   to the contrary that $\mu_{i,t} \not\sim \mu_{i, t'} $ in $ K_c^G$. Then there exists $(v_s,j) (v_{s'},j') \in E(G)$ such that 
	   $\mu_{i,t} ( v_s,j) = \mu_{i, t'} (v_{s'}, j')$. Let $\alpha = \mu_{i,t} ( v_s,j) = \mu_{i, t'} (v_{s'}, j')$. Then
	   $$\alpha \in  \{j+\delta_{j \ge i}, q+j+ \delta_{q+j \ge i}, t-\delta_{i \ge t}\} \cap  \{j'+\delta_{j' \ge i}, q+j'+\delta_{q+j' \ge i}, t'-\delta_{i \ge t'}\}.$$
	    
	    For $a \in \{t,t'\}$ and $b \in \{j, j'\}$, since $a \ge q+2$ and $b \le q$, we have $\delta_{i \ge a} + \delta_{b \ge i} \le 1$.  Hence 
	      $ b + \delta_{b \ge i}  <  a - \delta_{i \ge a}$.  
	      
	   We consider two cases.
	   
	   \noindent{\bf Case 1} $j \ne j'$.
	   
	   In this case,  $j+ \delta_{j\ge i} \notin \{   j'+ \delta_{j' \ge i},   q+j'+\delta_{q+j' \ge i},  t' - \delta_{i \ge t'}\}$ and  $j'+ \delta_{j'\ge i} \notin \{   j+ \delta_{j \ge i},   q+j+\delta_{q+j \ge i},  t - \delta_{i \ge t}\}$.
	   So
	   $$\alpha \in   \{  q+j+ \delta_{q+j \ge i}, t-\delta_{i \ge t} \} \cap  \{  q+j'+\delta_{q+j' \ge i}, t'-\delta_{i \ge t'}\} .$$ 
As $t -\delta_{i \ge t} \ne t'-\delta_{i \ge t'}$ and $ q+j+ \delta_{q+j \ge i} \ne q+j'+\delta_{q+j' \ge i}$, we may assume that 
 $\alpha =  q+j+ \delta_{q+j \ge i} = t'-\delta_{i \ge t'}$. This implies that   
$d_F(v_i, v_s)=1$ and $d_F(v_{s'}, v_i) \ge 3$, in contrary to the assumption that $(v_s,j) (v_{s'},j') \in E(G)$.

	   \noindent{\bf Case 2} $j = j'$.
	   
	   As $(v_s,j) \ne (v_{s'}, j)$, we have  $v_s \ne v_{s'}$,  $v_sv_{s'} \in E(F)$, and $\alpha \in   \{j+\delta_{j \ge i}, q+j+ \delta_{q+j \ge i} \}$.
	   If $\alpha=j+\delta_{j \ge i}$, then 
	   $d_F(v_s, v_i), d_F(v_{s'}, v_i) \in \{0,2\}$. As $v_sv_{s'} \in E(F)$, this implies that $F$ has a $3$-cycle or a $5$-cycle, contrary to the assumption that $F$ has odd girth $7$. 
	   
	   Assume $\alpha =  q+j+ \delta_{q+j \ge i}$. 
	   It is possible that $\alpha \in \{t - \delta_{i \ge t}, t' - \delta_{i \ge t'} \}$.   
	   As $t - \delta_{i \ge t} \ne t' - \delta_{i \ge t'}$, by symmetry, we assume that $ q+j+ \delta_{q+j \ge i} \ne t-\delta_{i\ge t}$. Then 
	   $\mu_{i,t}(v_s, j)$ is defined by the second line of the definition of $\mu_{i,t}$. Hence  $d_F(v_s, v_i)=1$. As $v_sv_{s'} \in E(F)$, we conclude that $d_F(v_{s'}, v_i) \le 2$. This implies that 
	   $\mu_{i,t}(v_{s'}, j)$ is not defined by the third line of the definition of $\mu_{i,t}$, even though
	   $\mu_{i,t}(v_{s'}, j)$ possibly equals $t' - \delta_{i \ge t'}$. So $\mu_{i,t}(v_{s'}, j)$
	   is also defined by the second line of the definition of $\mu_{i,t}$. Hence $d_F(v_{s'}, v_i) = 1$. Then $F$ has a 3-cycle, again a contradiction.

	   \medskip
	   
	   (4) For $i \in [p]$ and $t \in [2q+2,3q+2]$, as $Im(\theta_{i,t}) = \{i,t\}$, we know that $\theta_{i,t} \sim g_j$ in $K_c^G$ for $j \not\in \{i,t\}$. It remains to show that $\theta_{i,t} \sim \mu_{i,t}$   and 
	   $\theta_{i,t} \sim  \phi$ in $ K_c^G$.
	   
	   If $\theta_{i,t} \not\sim \mu_{i,t} $ in $ K_c^G$, then there exists $(v_s,j)(v_{s'},j') \in E(G)$ such that $\theta_{i,t} (v_s,j) =  \mu_{i,t}(v_{s'},j')   \in Im(\theta_{i,t}) \cap Im(\mu_{i,t}) = \{t\}$.  But $\theta_{i,t} (v_s,j) = t$ implies that $d_F(v_s,v_i) \le 1$, and $\mu_{i,t}(v_{s'},j') = t$ implies that $d_F(v_{s'}, v_i) \ge 3$ (note that as $i \le p \le 2q+1$ and  $t \ge 2q+2$, $\mu_{i,t} (v_{s'}, j') = t-\delta_{i \ge t} = t$ when $d_{F}(v_{s'},v_{i}) \ge 3$), in contrary to the assumption that $(v_s,j)(v_{s'},j') \in E(G)$. 
	   
	   If $\theta_{i,t} \not\sim \phi $ in $ K_c^G$, then there exists $(v_s,j)(v_{s'},j') \in E(G)$ such that $\theta_{i,t} (v_s,j) =  \phi(v_{s'},j')   \in Im(\theta_{i,t}) \cap Im(\phi) = \{i\}$.   But $\theta_{i,t} (v_s,j)=i$ implies that $d_F(v_s, v_i  ) \ge 2$, and $\phi(v_{s'},j') = i$ implies that $s'=i$,  again in contrary to the assumption that $(v_s,j)(v_{s'},j') \in E(G)$. 
	   
	   This completes the proof of Theorem \ref{thm-main}.
	    \end{proof}

	\section{Some remarks and Questions}
	\label{sec4}

	(1) As observed in Section 3, the existence of a graph $F$ of odd girth $7$, order $p$  and fractional chromatic number greater than $3+ 4/(p-1)$ is guaranteed by a classical result of Erd\H{o}s \cite{Erdos}. Let $p$ be the minimum order of an odd girth $7$ graph $F$ with $\chi_f(F) > 3+4/(p-1)$. 
	The probabilistic proof in \cite{Erdos}  can be used to derive  an upper bound on $p$.
	  However, that bound is very big. 
		The first relatively small upper bound  on $p$, which I learned from    Anna Gujgiczer and G\'{a}bor Simonyi \cite{Simonyi},	 is obtained  by using the generalized Mycielski construction (cf. \cite{GJS2004,Tardif3}).	Let   $P_r$ be obtained from the path $v_0v_1\ldots v_r$   by adding a loop at vertex $v_0$. The {\em generalized Mycielski graph} $M_r(G)$ of $G$ is obtained from $G \times P_r$ by identifying all the vertices whose second coordinate is $v_r$. So  $M_1(G)$ is obtained from $G$ by adding a universal vertex, and $M_2(G)$ is the Mycielski graph of $G$. Tardif \cite{Tardif3} showed that for any graph $G$ and any positive integer $r$, $$\chi_f(M_r(G)) = \chi_f(G) + \frac{1}{\sum_{i=0}^{r-1}(\chi_f(G)-1)^i}.$$ 
		For an integer vector $\vec{r} = (r_1,r_2,\ldots, r_d)$, let $M_{\vec{r}}(G) = M_{r_1}(M_{r_2} \ldots (M_{r_d}(G) \ldots))$. Let $G = M_{(3,3,3,3)}(C_7)$. 
		Then $G$ has 607 vertices, odd girth $7$, and $\chi_f(G) > 3.09$ according to the above formula.  
		
		After the preliminary version of this paper, I learned from  Geoffrey Exoo and Jan Goedgebeur \cite{Exoo} that they found an odd girth 7 graph $G$ on 49 vertices with independence number 17, through a computer search. Then $M_3(G)$ is an odd girth 7 graph on $148$ vertices with fractional chromatic number greater than $3.03$.
		Later, Geoffrey Exoo found an odd girth 7 graph $G$ on 83 vertices and with independence number 27. This implies that $\chi_f(G) > 3.074$. 
		 So $p \le 83$.   
		The exact value of  $p$   remains an open problem. 
		
		\begin{question}
			\label{q1}
			What is the minimum order $p$ of a graph $F$ of odd girth $7$ and with $\chi_f(F) > 3+ 4/(p-1)$? In general, what is the minimum order $n_{k,t}$ of a graph with   odd girth at least $k$ and fractional chromatic number at least $t$?
		\end{question}

		With $c =3q+2$ and $q = \lceil \frac{p-1}{2} \rceil$, the number of vertices in $H$ is  
		$$3q+2+1 +p(2q+1)+p(q+1) = 
	  3  \lceil \frac {p+1}2 \rceil   (p+1)-p.$$ The graph $G=F[K_q]$ has $pq$ vertices.   For $p=83$, we have $q=41$ and $c=125$. The two graphs $G$ and $H$ in Theorem \ref{thm-main}  have $3,403$ and $10,501$ vertices, respectively.
	  
	  Exoo also found an odd girth 7 graph $G$ on $84$ vertices and with independence number $27$. Then $\chi_f(G) \ge 3+1/9$. In the proof of Theorem \ref{thm-main}, $q$ can be any integer at least $(p-1)/2$. Also instead of $c=3q+2$, we may choose $c=3q+3$ or $3q+4$, provided that we can guarantee that $\chi(G) \ge \chi_f(F)q > c$. By choosing $q$ and $c$ appropriately, we can show that Hedetniemi's conjecture fails for all $c \ge 125$, except that for $c=127$ (if we choose $p=83, q=41, c=3q+4$, then we cannot guarantee that $\chi(G) > c$). 

	\bigskip
	
	(2) The Poljak-R\"{o}dl function is defined in \cite{PR}:
	$$f(n) = \min\{\chi(G \times H): \chi(G), \chi(H) \ge n\}.$$
	Hedetniemi's conjecture is equivalent to saying that $f(n) =n$ for all positive integer $n$. Shitov's Theorem says that for sufficiently large $n$, $f(n) \le n-1$. Using Shitov's result,
	Tardif and Zhu \cite{TZ2019} proved that $f(n) \le n - \left(\log n\right)^{1/4-o(1)}$ for sufficiently large $n$. Tardif and Zhu asked in \cite{TZ2019} if there is a positive constant $\epsilon $ such that $f(n) \le (1-\epsilon)n$ for sufficiently large $n$. This question was answered in affirmative by He and Wigderson \cite{HW2019} with $\epsilon \approx 10^{-9}$.  Recently, Zhu    \cite{Zhu-Poljak-Rodl} proved that  $\limsup_{n \to \infty} \frac{f(n)}{n} \le \frac 12$.

		  On the other hand, the only known lower bound for $f(n)$ is that $f(n) \ge 4$ for $n \ge 4$. An intriguing question is whether   $f(n)$ goes to infinity with $n$.
		  
		  \begin{question}
		  	[\cite{PR}] Is $f(n)$ bounded by a constant or $\lim_{n \to \infty} f(n) = \infty$?
		  \end{question}
		  
		 It was proved in \cite{PR} that if $f(n)$ is bounded by a constant, then the smallest such constant is at most $16$. This result is further improved in \cite{Poljak} (see also \cite{Zhu}) that the smallest such constant is at most $9$.
		 To gain insight to this problem, it is still  interesting to study Hedetniemi's conjecture for small integers $c$.

		 The fractional version of Hedetniemi's conjecture was proved in  \cite{Zhu-frac}:
		For any two graphs $G$ and $H$, $\chi_f(G \times H) = \min \{\chi_f(G), \chi_f(H)\}$.
		Thus if $f(n)$ is bounded by $9$, and  $G$ and $H$ are $n$-chromatic graphs with 
		$\chi(G \times H) \le 9$, then at least one of $G$ and $H$  has   fractional chromatic number at most $9$.

		In \cite{Zhu-frac},  the following Poljak-R\"{o}dl type function was defined:
		$$\psi(n) = \min \{\chi(G \times H): \chi_f(G), \chi(H) \ge n\}.$$
		It follows from the definition that 
		$f(n) \le \psi(n)$.
		This author proposed a weaker version of Hedetniemi's conjecture in \cite{Zhu-frac}, which is equivalent to the statement that 
		$\psi(n)=n$ for all positive integer $n$. However, Shitov's proof actually refutes this weaker version of Hedetniemi's conjecture. The same is true for the result in this paper,   as the graph $G$ used in the proof of Theorem \ref{thm-main}  has large fractional chromatic number.

		The proof in \cite{Zhu-Poljak-Rodl} actually    shows that 
		$$\limsup_{n \to \infty} \frac{\psi(n)}{n} \le \frac 12.$$
	 A natural question is the following: 
		
		\begin{question}
			 Is $\psi(n)$ bounded by a constant?
			 If $\psi(n)$ is bounded by a constant, what could be  the smallest such constant?
		\end{question}

		\bigskip

  \bigskip

  (3) The result of El-Zahar and Sauer that Hedetniemi's conjecture holds for $c=3$ is still the best result in the positive direction of Hedetniemi's conjecture.  
  On the other hand, there are nice strengthenings of this result, in the setting of multiplicative graphs. We say a graph $Q$ is \emph{multiplicative} if for any two graphs $G,H$, $G \not\to Q$ and $H \not\to Q$ implies that $G \times H \not\to Q$.   Hedetniemi's conjecture is equiavelnt to the statement that $K_c$ is multiplicative for any positive integer $c$. El-Zahar and Sauer proved that $K_3$ is multiplicative. H\"{a}ggkvist, Hell, Miller and Neumann Lara \cite{HHMN} proved that odd cycles are multiplicative and Tardif \cite{Tardif2} proved that circular cliques $K_{k/d}$ for $k/d< 4$ are multiplicative, where $K_{k/d}$ has vertex set $[k]$ with $i\sim j$ if and only if $d \le |i-j| \le k-d$. (So $K_{k/1}=K_k$ and $K_{(2k+1)/k}=C_{2k+1}$ ).  In 2017, Wrochna \cite{Wrochna}  extended greatly the family of known multiplicative graphs by showing that any graph without $4$-cycles is multiplicative. This result is further improved in \cite{Tardif-W} where it is 
  shown that graphs in which each edge lies in at most one 4-cycle are multiplicative, and also that third powers of graphs with girth at least 13 are multiplicative.

		\vspace{3mm}
		
	\noindent
	{\bf Acknowledgement} I would like to thank G\'{a}bor Tardos for inspiring comments that led to further reduction on the chromatic number and the number of vertices of the involved graphs from an earlier version, and for many comments that improve the presentation of this paper. 
	I also  thank  Eun-Kyung Cho, Geoffrey Exoo, Jan Goedgebeur, 
	Anna Gujgiczer,
	Benjamin R. Moore,
	Yaroslav Shitov,  
	 G\'{a}bor Simonyi and 
	  Claude Tardif 
	     for valuable comments and discussions and for the efforts of finding and reducing   the upper bounds for $p$ and keep me informed.

\end{document}